\documentclass[11pt]{article}
\usepackage{amsthm,amsmath,amssymb,amscd,enumerate,graphicx}
\usepackage[all]{xy}
\usepackage[top=40truemm, bottom=40truemm, left=30truemm, right=30truemm]{geometry}


\theoremstyle{plain}
\newtheorem*{Thm}{Main Theorem}
\newtheorem*{Cor}{Corollary}
\newtheorem{thm}{Theorem}

\newtheorem{lem}{Lemma}

\theoremstyle{definition}

\newtheorem{exm}{Example}
\newtheorem{rem}{Remark}
\newtheorem*{Proof}{Proof of the Main Theorem}
\renewcommand{\ker}{\operatorname{Ker}}

\newcommand{\id}{\operatorname{\mathbf{id}}}
\newcommand{\bZ}{\operatorname{\mathbb{Z}}}
\newcommand{\bC}{\operatorname{\mathbb{C}}}

\newcommand{\bN}{\operatorname{\mathbb{N}}}
\newcommand{\bR}{\operatorname{\mathbb{R}}}

\title{$K$-theory for the crossed products of infinite tensor product of $C^*$-algebras by the shift}
\author{Issei Ohhashi}

\begin{document}
\maketitle

\begin{abstract}
C. Schochet shows K\"unneth theorem for the $C^*$-algebras in the smallest class of nuclear $C^*$-algebras which contains the separable Type I algebras and is closed under some operations.
We calculate the $K$-theory for the crossed product of the infinite tensor product of a unital $C^*$-algebra in this class by the shift. 
In particular, we calculate the $K$-theory of the lamplighter group $C^*$-algebra.
\end{abstract}

\section{Introduction}

Let $A$ be a unital $C^*$-algebra
and $A^{\otimes n}$ denote the (minimal) tensor product of $n$ copies of $A$ for each $n \geq 1$.
An \textit{infinite tensor product} $A^{\otimes \bZ}$ of $A$ is the inductive limit of the sequence $\{A^{\otimes 2n-1}\}_n$ with the embeddings $a \mapsto 1_A \otimes a \otimes 1_A$. 
Note that $A^{\otimes \bZ}$ is represented as the closed linear span of
\[\left\{\textstyle\bigotimes \limits_{i\in\bZ} a_i \ :\ a_i \in A, \ 
a_j = 1_A \text{ for all but finitely many } j \right\}.\]
A \textit{shift} $s$ on $A^{\otimes \bZ}$ is the automorphism of $A^{\otimes \bZ}$ satisfying
\begin{align*}
s\left(\textstyle\bigotimes \limits_{i\in\bZ} a_i\right) &= \textstyle\bigotimes \limits_{i\in\bZ} a_{i-1} & \text{ for all } \textstyle\bigotimes \limits_{i\in\bZ} a_i \in A^{\otimes \bZ}.
\end{align*}
We consider the crossed product $A^{\otimes \bZ} \rtimes_s \bZ$ of $A^{\otimes \bZ}$ by $\bZ$ under the action by $s$.

	\begin{exm}\label{ex1}
	\begin{enumerate}
	\item
	Let $C(X)$ denote the $C^*$-algebra of all continuous functions on a compact Hausdorff space $X$ and $\theta$ be an automorphism on $X^{\bZ}$ defined by $(x_i)_{i \in \bZ} \mapsto (x_{i+1})_{i \in \bZ}$.
	Then $C(X^{\bZ}) \rtimes_{\theta} \bZ \cong C(X)^{\otimes \bZ} \rtimes_{s} \bZ$.
	\item
	Let $\Gamma \wr \bZ$ denote the \textit{wreath product} of an amenable discrete group $\Gamma$ with $\bZ$.
	Then, since the group $C^*$-algebra $C^*(\Gamma)$ of $\Gamma$ is nuclear, 
	$C^*(\Gamma \wr \bZ)  \cong C^*(\Gamma)^{\otimes \bZ} \rtimes_s \bZ$.
	\end{enumerate}
	\end{exm}

For a $C^*$-algebra $B$, $K_*(B)$ is the ($\bZ_2$-)graded $K$-theory $K_0(B) \oplus K_1(B)$, where $\bZ_2 := \bZ/2\bZ$.
Let $\mathcal{N}$ denote the smallest class of separable nuclear $C^*$-algebras which contains the separable Type I algebras and is closed under the operations of taking ideals, quotients, extensions, inductive limits, stable isomorphism, and crossed products by $\bZ$ or $\bR$. 
The class $\mathcal{N}$ plays an essential role in K\"unneth theorem by C. Schochet: 

	\begin{thm}[the Spatial Case of K\"unneth Theorem {\cite[Theorem 2.14]{S}}]
	Let $A$ and $B$ be $C^*$-algebras such that $A$ is contained in $\mathcal{N}$. If $K_*(B)$ is torsion free, then there exists a natural isomorphism
	\[\alpha : K_*(A) \otimes K_*(B) \longrightarrow K_*(A \otimes B),\]
	that is, there exist a natural isomorphisms
	\begin{align*}
	(K_0(A) \otimes K_0(B)) \oplus (K_1(A) \otimes K_1(B)) \longrightarrow K_0(A \otimes B),\\
	(K_0(A) \otimes K_1(B)) \oplus (K_1(A) \otimes K_0(B)) \longrightarrow K_1(A \otimes B).
	\end{align*}
	\end{thm}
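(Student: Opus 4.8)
The plan is to fix a $C^*$-algebra $B$ with $K_*(B)$ torsion free and to study the class
\[\mathcal{C}_B := \{A : A \text{ separable and } \alpha_{A,B} \text{ is an isomorphism}\},\]
where $\alpha_{A,B}$ is the graded external product map
\[\alpha_{A,B} : K_*(A) \otimes K_*(B) \longrightarrow K_*(A \otimes B)\]
induced by the usual external product in topological $K$-theory (equivalently, the Kasparov product). Since $\mathcal{N}$ is by definition the smallest class containing the separable Type I algebras and closed under the listed operations, it suffices to verify that $\mathcal{C}_B$ contains the separable Type I algebras and enjoys the same closure properties; then $\mathcal{N} \subseteq \mathcal{C}_B$, which is exactly the assertion. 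The engine of the entire argument is that a torsion-free abelian group is flat as a $\bZ$-module, so the functor $(-) \otimes K_*(B)$ is exact; this is precisely what collapses the general K\"unneth short exact sequence to the bare isomorphism claimed here.

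First I would record the base case. For $A = \bC$ one has $\bC \otimes B \cong B$ and $\alpha_{\bC,B}$ is the canonical identification $\bZ \otimes K_*(B) \cong K_*(B)$, hence an isomorphism; the separable Type I algebras are then reached by assembling them from their composition series with continuous-trace subquotients, together with the closure properties established below. The heart of the matter is closure under extensions. Given a short exact sequence $0 \to I \to A \to A/I \to 0$, nuclearity of the algebras (minimal $=$ maximal tensor product) keeps
\[0 \to I \otimes B \to A \otimes B \to (A/I) \otimes B \to 0\]
exact, producing a six-term exact sequence in $K$-theory. Applying the exact functor $(-) \otimes K_*(B)$ to the six-term sequence of the original extension yields a second exact ladder, and naturality of the external product makes $\alpha$ a morphism of these two ladders. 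If two of the three terms $I$, $A$, $A/I$ lie in $\mathcal{C}_B$, the five lemma forces the third into $\mathcal{C}_B$; this handles ideals, quotients, and extensions simultaneously.

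The remaining operations follow the same template with other long exact sequences in place of the extension ladder. Inductive limits are dealt with by continuity of $K$-theory together with the commutation of both the tensor product and $(-) \otimes K_*(B)$ with direct limits, so $\alpha$ for a limit is the limit of isomorphisms; stable isomorphism invariance is immediate from stability of $K_*$. For crossed products by $\bZ$ I would invoke the Pimsner--Voiculescu sequence for $A$ and for $(A \rtimes \bZ) \otimes B \cong (A \otimes B) \rtimes \bZ$, tensor the former with $K_*(B)$, and compare via the five lemma. For crossed products by $\bR$ I would use the Connes--Thom isomorphism $K_*(A \rtimes \bR) \cong K_{*+1}(A)$, which is natural and compatible with tensoring, transporting membership in $\mathcal{C}_B$ directly.

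I expect the main obstacle to be bookkeeping rather than conceptual: one must construct $\alpha$ as an honest natural transformation and then verify that it is compatible, with the correct signs, with every connecting map occurring in the six-term, Pimsner--Voiculescu, and Connes--Thom sequences, since the five lemma is useless without the commutativity of these ladders. The graded-commutativity conventions in the multiplicative structure make these sign checks the delicate part. Once that compatibility is in hand, flatness of $K_*(B)$ does all the real work, and the torsion-free hypothesis is seen to be exactly the condition eliminating the $\operatorname{Tor}$ term of the full K\"unneth sequence.
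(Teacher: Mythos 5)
The paper itself offers no proof of this statement---it is quoted from Schochet \cite[Theorem 2.14]{S}---so the only proof to compare against is Schochet's. Your proposal reproduces the skeleton of that argument: fix $B$ with $K_*(B)$ torsion free (hence flat over $\bZ$, which kills the $\operatorname{Tor}$ term and makes $(-)\otimes K_*(B)$ exact), let $\mathcal{C}_B$ be the class of separable $A$ for which the external product $\alpha_{A,B}$ is an isomorphism, and show $\mathcal{C}_B$ contains the separable Type I algebras and is closed under the operations generating $\mathcal{N}$, comparing six-term, Pimsner--Voiculescu and Connes--Thom ladders via the five lemma. That is the right strategy and the one used in the source.

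There is one step that fails as written: the claim that the five lemma ``handles ideals, quotients, and extensions simultaneously.'' What the ladder comparison gives is the two-out-of-three property---if two of $I$, $A$, $A/I$ lie in $\mathcal{C}_B$, so does the third. But the definition of $\mathcal{N}$ quoted in this paper lists \emph{taking ideals} and \emph{taking quotients} as unary closure operations: from $A\in\mathcal{N}$ alone one passes to any ideal or quotient. To conclude $\mathcal{N}\subseteq\mathcal{C}_B$ by minimality you would need $\mathcal{C}_B$ closed under these unary operations, and when only one of the three algebras is known to lie in $\mathcal{C}_B$ only every third vertical map in the ladder is known to be an isomorphism, which is not enough for the five lemma. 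You must either argue that the class generated from the Type I algebras by the remaining operations together with two-out-of-three already contains $\mathcal{N}$, or work with the two-out-of-three formulation of the bootstrap class from the outset (as in Blackadar's treatment); this is a genuinely delicate point, not mere bookkeeping like the sign checks you flag. Separately, the base case needs more than $A=\bC$: you must first place commutative and continuous-trace algebras in $\mathcal{C}_B$ (via the topological K\"unneth theorem and the closure properties) before the composition-series argument for general separable Type I algebras, which involves a countable-ordinal-indexed inductive limit, can be run.
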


We introduce the concept of infinite tensor product of graded abelian group.
Let $G = G_0 \oplus G_1$ be an graded abelian group and $G^{\otimes n}$ denote the tensor product of $n$ copies of $G$ (as graded $\bZ$-module). 
For an elements $e$ of $G_0$, an \textit{infinite tensor product} $(G, e)^{\otimes \bZ}$ of $G$ with respect to $e$ is the inductive limit of the sequence $\{G^{\otimes 2n-1}\}_n$ with the embeddings $x \mapsto e \otimes x \otimes e$. 
We often denote $(G, e)^{\otimes \bZ}$ by $G^{\otimes \bZ}$.
Note that $(G, e)^{\otimes \bZ}$ is represented as the linear span of
\[\left\{\textstyle\bigotimes \limits_{i\in\bZ} x_i \ :\ x_i \in G, \ 
x_j = e \text{ for all but finitely many } j \right\}\]
with the grading $(G^{\otimes \bZ})_0 \oplus (G^{\otimes \bZ})_1$, where $(G^{\otimes \bZ})_p$ is the linear span of
\[\left\{\textstyle\bigotimes \limits_{i\in\bZ} x_i \in (G, e)^{\otimes \bZ}\ :\ x_i \in G_{p_i},\  p_i \in \bZ_2,\ \textstyle\sum\limits_{i \in \bZ} p_i =p\right\}.\]
A \textit{shift} $\lambda$ on $(G, e)^{\otimes\bZ}$ is the automorphism of $(G, e)^{\otimes \bZ}$ satisfying
\begin{align*}
\lambda\left(\textstyle\bigotimes \limits_{i\in\bZ} x_i\right) &= \textstyle\bigotimes \limits_{i\in\bZ} x_{i-1}  & \text{for all }
\textstyle\bigotimes \limits_{i\in\bZ} x_i \in (G, e)^{\otimes \bZ}.
\end{align*}

	\begin{rem}\label{rem2}
	Let $A$ be a unital $C^*$-algebra in $\mathcal{N}$, and assume that $K_*(A)$ is a graded \textit{free} abelian group.
	\begin{enumerate}
	\item K\"unneth theorem gives that there exists a natural isomorphism $\alpha_n$ from $K_*(A)^{\otimes n}$ to $K_*(A^{\otimes n})$ for each $n \geq 1$. Form the definition of $\alpha$ \cite[p.446]{S},  
	there exists a natural isomorphism $\alpha_{\infty}$ from $(K_*(A), {[1_A]}_0)^{\otimes \bZ}$ to $K_*(A^{\otimes \bZ})$ such that the following diagram is commutative:
	\begin{align}
	\SelectTips{cm}{}\xymatrix@-0.5pc{
	K_*(A) \ar[r] \ar[d]^{\alpha_1} & K_*(A)^{\otimes 3} \ar[r] \ar[d]^{\alpha_3} & K_*(A)^{\otimes 5} \ar[r] \ar[d]^{\alpha_5} & \cdots \ar[r] &K_*(A)^{\otimes \bZ} \ar@{.>}[d]^{\alpha_{\infty}}\\
	K_*(A) \ar[r] & K_*(A^{\otimes 3}) \ar[r] & K_*(A^{\otimes 5}) \ar[r] & \cdots \ar[r] &K_*(A^{\otimes \bZ}).}
	\end{align}
	\item The automorphism $\lambda := \alpha_{\infty}^{-1} \circ s_* \circ \alpha_{\infty}$ is the shift on $(K_*(A), {[1_A]}_0)^{\otimes \bZ}$.
%


\item Applying $A^{\otimes \bZ} \rtimes_s \bZ$ to Pimsner--Voiculescu theorem \cite[Theorem 10.2.1]{B}, we obtain the following exact triangle:
\begin{align*}
\SelectTips{cm}{}\xymatrixcolsep{0pc}\xymatrix @-0.8pc{
K_*(A^{\otimes \bZ}) \ar[rr]^{\id - s_*}& &
K_*(A^{\otimes \bZ}) \ar[ld]^<<<{ \iota_*}\\
&K_*(A^{\otimes \bZ} \rtimes_s \bZ) \ar[lu]^>>>{\partial},&}
\end{align*}
where $\id - s_*$ and $\iota_*$ have degree $0$ and $\partial$ has degree $1$.
Therefore, by (i) and (ii), we obtain the following exact triangle:
\begin{align}\label{d1}
\SelectTips{cm}{}\xymatrixcolsep{0pc}\xymatrix @-0.8pc{
K_*(A)^{\otimes \bZ} \ar[rr]^{\id - \lambda}& &
K_*(A)^{\otimes \bZ} \ar[ld]^<<<{\iota_* \circ \alpha_{\infty}}\\
&K_*(A^{\otimes \bZ} \rtimes_s \bZ) \ar[lu]^>>>{\alpha_{\infty}^{-1} \circ \partial}.&}
\end{align}
\end{enumerate}
\end{rem}

Let $G$ be a free abelian group which has a basis $E$ containing $e$.
Let $\check{G}$ be the subgroup of $G$ such that
\begin{align}\label{e3}
G = \bZ e \oplus \check{G}.
\end{align}
Let $\check{G} \otimes G^{\otimes \bN}$ denote the linear span of
\[\left\{\textstyle\bigotimes \limits_{i\in\bZ} x_i \in G^{\otimes \bZ}\ \colon\  x_0 \in \check{G},\  x_i = e\text{ for all }i < 0\right\}.\]
Set $H(G, e) := \bZ e^{\otimes \bZ} + \check{G} \otimes G^{\otimes \bN}$ where $e^{\otimes \bZ} := \bigotimes_{i \in \bZ} e$.	
Then, we prove in Lemma \ref{lem3} that 
\[G^{\otimes \bZ} = (\id-\lambda)(G^{\otimes \bZ}) \oplus H(G, e),\]
where $\lambda$ is the shift on $G^{\otimes \bZ}$,
In particular, $H(G, e)$ is naturally isomorphic to the cokernel of $\id-\lambda$.

	\begin{rem}
	\begin{enumerate}
	\item If $G$ is an \textit{ordered} free abelian group with the positive cone $G_{+}$ and $e \in G_{+}$, then $(G, e)^{\otimes \bZ}$ is also an ordered abelian group with the positive cone $G_{+}^{\otimes \bZ}$, all sums of the elements of
	\[\left\{\textstyle\bigotimes \limits_{i\in\bZ} x_i \in (G, e)^{\otimes \bZ} \ :\ x_i \in G_{+} \right\}.\]
	Set $H(G,e)_{+} := H(G,e) \cap G_{+}^{\otimes \bZ}$.
	\item Let $E^{\otimes \bZ}$ denote the basis $\left\{\textstyle\bigotimes_{i \in \bZ} e^{(i)} \ :\ e^{(i)} \in E, \ e^{(j)} = e \text{ for all but finitely many $j$} \right\}$ of $(G, e)^{\otimes \bZ}$.
	Then, there exists a natural isomorphism
	\[\Phi(G, e, E)\ :\ (G, e)^{\otimes \bZ} \longrightarrow \bZ^{\oplus (E^{\otimes \bZ})}.\]
	But, $\Phi(G, e, E)$ may \emph{not} be order-preserving, where the positive cone of $\bZ^{\oplus I}$ is $\bZ_{+}^{\oplus I} := \{(k_m)_{m\in I} \in \bZ^{\oplus I}\ :\ k_m \geq 0\}$ for a set $I$.
	\end{enumerate}
	\end{rem}

	\begin{exm}
	 \begin{enumerate}
	 \item $(\bZ, 1)^{\otimes \bZ} = H(\bZ, 1) = \bZ 1^{\otimes \bZ}$.
	 \item For a natural number $d \geq 2$, let $E_d$ be the standard basis $\{e^{(k)}\}_{k=1}^d$ of $\bZ^d$. 
	 Then, $\Phi(\bZ^d, e^{(1)}, E_d)$ is order-preserving.
	 On the other hand,
	let $\tilde E_d$ be the basis $\{e^{(0)}\} \cap \{e^{(k)}\}_{k=2}^d$ where $e^{(0)} := (1,1, \cdots, 1) \in \bZ^d$.
	 Then, $\Phi(\bZ^d, e^{(0)}, \tilde E_d)$ is \emph{not} order-preserving.
	\end{enumerate}
	\end{exm}

The following is our main theorem:

	\begin{Thm}
	Let $A$ be a $C^*$-algebra in $\mathcal{N}$ and assume that $K_*(A)$ is a free abelian group which has a basis containing ${[1_A]}_0$. 
	Then there exists the following split exact sequence\emph{:}
	\begin{align}\label{d2}
	\SelectTips{cm}{}\xymatrix@-0.5pc{
	0 \ar[r] & H(K_*(A), {[1_A]}_0) \ar[r]^-{\varphi} & K_*(A^{\otimes \bZ} \rtimes_s \bZ) \ar[r]^-{\psi}& \bZ{[1_A]}_0^{\otimes \bZ} \ar[r] &0,}
	\end{align}
	where  $\varphi$ has degree $0$ and $\psi$ has degree $1$. 
	In particular, $\varphi$ is the restriction of $\iota_* \circ \alpha_{\infty}$ and $\psi$ is the homomorphism defined by $x \mapsto (\alpha_{\infty}^{-1} \circ \partial)(x)$. 	
	\end{Thm}
	
	\begin{Cor}
	Let $A$ be a $C^*$-algebra satisfying the assumption of the above theorem.
	If $K_1(A) = 0$, then 
	\begin{align*}
	\SelectTips{cm}{}\xymatrix@-0.5pc{H(K_0(A), {[1_A]}_0) \ar[r]^-{\varphi} & K_0(A^{\otimes \bZ} \rtimes_s \bZ),}&&
	\SelectTips{cm}{}\xymatrix@-0.5pc{K_1(A^{\otimes \bZ} \rtimes_s \bZ) \ar[r]^-{\psi}& \bZ{[1_A]}_0^{\otimes \bZ}}
	\end{align*}
	are isomorphisms. 
	Moreover, if $(K_0(A), K_0(A)_{+})$ is ordered abelian group, then $\varphi$ is order-preserving.
	\end{Cor}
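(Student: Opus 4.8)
The plan is to deduce the Corollary directly from the Main Theorem by splitting the graded short exact sequence \eqref{d2} into its homogeneous components, exploiting that the hypothesis $K_1(A) = 0$ collapses the grading. First I would record that, since $K_1(A) = 0$, the graded group $K_*(A) = K_0(A)$ is concentrated in degree $0$; consequently every pure tensor $\bigotimes_{i \in \bZ} x_i$ spanning $K_*(A)^{\otimes \bZ}$ has all of its factors in degree $0$, so its total degree is $0$. Hence $K_*(A)^{\otimes \bZ}$, its subgroup $H(K_*(A), {[1_A]}_0)$, and $\bZ{[1_A]}_0^{\otimes \bZ}$ are all concentrated in degree $0$.

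Next I would feed this into the degree constraints on $\varphi$ and $\psi$ coming from the Main Theorem. Writing $N := K_*(A^{\otimes \bZ} \rtimes_s \bZ) = N_0 \oplus N_1$ with $N_0 = K_0$ and $N_1 = K_1$: because $\varphi$ has degree $0$ and its source lies in degree $0$, one gets $\im \varphi \subseteq N_0$; because $\psi$ has degree $1$ and its target $\bZ{[1_A]}_0^{\otimes \bZ}$ lies in degree $0$, $\psi$ annihilates $N_0$, so $N_0 \subseteq \ker \psi$. Exactness of \eqref{d2} in the middle gives $\ker \psi = \im \varphi \subseteq N_0$, and together with $N_0 \subseteq \ker \psi$ this forces $\ker \psi = \im \varphi = N_0$. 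Therefore $\varphi \colon H(K_0(A), {[1_A]}_0) \to N_0 = K_0(A^{\otimes \bZ} \rtimes_s \bZ)$ is injective (exactness at the left of \eqref{d2}) and surjective (as $\im\varphi = N_0$), hence an isomorphism; and $\psi|_{N_1}$ is injective (since $N_1 \cap \ker\psi = N_1 \cap N_0 = 0$) and, as $\psi(N_0)=0$ while $\psi$ is surjective, also surjective, so $\psi \colon K_1(A^{\otimes \bZ} \rtimes_s \bZ) \to \bZ{[1_A]}_0^{\otimes \bZ}$ is an isomorphism.

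For the order statement I would invoke the ordered structure supplied by the Remark on ordered infinite tensor products: when $(K_0(A), K_0(A)_{+})$ is ordered, $(K_0(A), {[1_A]}_0)^{\otimes \bZ}$ carries the positive cone $K_0(A)_{+}^{\otimes \bZ}$ and $H(K_0(A), {[1_A]}_0)$ the cone $H(K_0(A), {[1_A]}_0) \cap K_0(A)_{+}^{\otimes \bZ}$ (note ${[1_A]}_0 \in K_0(A)_{+}$, since $1_A$ is a projection). As $\varphi$ is the restriction of $\iota_* \circ \alpha_{\infty}$, it suffices to show both factors preserve positivity. The finite K\"unneth maps $\alpha_n$ send ${[p_1]}_0 \otimes \cdots \otimes {[p_n]}_0$ to ${[p_1 \otimes \cdots \otimes p_n]}_0$, the class of a projection, hence carry sums of positive pure tensors into $K_0(A^{\otimes n})_{+}$; passing to the inductive limit along the commutative diagram in Remark \ref{rem2}(i) yields $\alpha_{\infty}(K_0(A)_{+}^{\otimes \bZ}) \subseteq K_0(A^{\otimes \bZ})_{+}$. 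Since $\iota_*$ is induced by the unital inclusion $\iota$, it maps positive classes to positive classes. Composing and restricting to the cone of $H(K_0(A), {[1_A]}_0)$ then shows $\varphi$ is order-preserving.

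The genuinely delicate point is this last one: the paper stresses that the basis-coordinate isomorphism $\Phi(G, e, E)$ need \emph{not} be order-preserving, so I must argue positivity through the intrinsic external product $\alpha_{\infty}$ rather than through any choice of basis. Concretely, the obstacle is to confirm from the definition of $\alpha$ in \cite{S} that the external product is multiplicative on classes of projections and that this multiplicativity is compatible with the connecting embeddings $x \mapsto {[1_A]}_0 \otimes x \otimes {[1_A]}_0$ of the inductive system, so that positivity survives in the limit; the remaining grading bookkeeping is routine.
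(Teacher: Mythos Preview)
The paper states the Corollary without proof, treating it as immediate from the split exact sequence~\eqref{d2} of the Main Theorem; your argument is exactly the natural unpacking of that claim, and it is correct. The degree bookkeeping you give (using that $H(K_*(A),{[1_A]}_0)$ and $\bZ{[1_A]}_0^{\otimes\bZ}$ collapse to degree~$0$ once $K_1(A)=0$, then reading off $\im\varphi=\ker\psi=N_0$) is precisely what ``immediate'' means here, and your order-preserving argument---$\alpha_\infty$ sends tensors of projection classes to projection classes and $\iota_*$ is induced by a $*$-homomorphism---is the point the paper leaves implicit. Nothing is missing.
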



	\begin{exm}
	We calculate the $K$-theory for the group $C^*$-algebra of the {\it lamplighter group}:
	\[\bZ_2\wr \bZ := \bZ_2^{\oplus \bZ} \rtimes \bZ.\]
	Note that 
	\begin{enumerate}
	\item $C^*(\bZ_2\wr \bZ) \cong (\bC^2)^{\otimes \bZ} \rtimes_s \bZ$ \ (Example \ref{ex1}),
	\item $\bC^2 \in \mathcal{N}$,
	\item $(K_0(\bC^2), K_0(\bC^2)_{+}, {[1_{\bC^2}]}_0) \cong (\bZ^2, \bZ_{+}^2, (1,1))$ and $K_1(\bC^2) = 0$.
	\end{enumerate}
	Applying the Main Theorem in the case of $A = \bC^2$, we obtain the following isomorphisms: 
	\begin{align*}
	(K_0(C^*(\bZ_2\wr\bZ)), K_0(C^*(\bZ_2\wr\bZ))_{+})  &\cong (H(\bZ^2, (1,1)), H(\bZ^2, (1,1))_{+}), & K_1(C^*(\bZ_2\wr\bZ))  &\cong  \bZ.
	\end{align*}
%
	\end{exm}

\section{Proof}

In this section, let $G$ be a graded free abelian group which has a basis $E$ containing $e$. 
Set $e^{\otimes n} := \bigotimes_{j=1}^n e \in G^{\otimes n}$ and $G^{\otimes 0} := \bZ$.
Notice that, by (\ref{e3}),
\begin{equation}\label{equation1}
G^{\otimes n} = (e \otimes G^{\otimes n-1}) \oplus (\check{G} \otimes G^{\otimes n-1})
\end{equation}
for each $n \geq 1$.

	\begin{lem}\label{lem1}
	For $n \geq 1$, $y \in G^{\otimes n}$, $k\in \bZ$, and $x \in \check{G} \otimes G^{\otimes n}$, if the equality 
	\begin{equation}\label{equation2}
	y \otimes e - e \otimes y = ke^{\otimes n+1} + x
	\end{equation}
	holds, then $y \in \bZ e^{\otimes n}$, $k=0$, and $x=0$.
	\end{lem}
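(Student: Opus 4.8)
The plan is to argue by induction on $n$, using the direct-sum decomposition (\ref{equation1}) applied to the \emph{first} tensor factor. Writing $y$ according to $G^{\otimes n} = (e \otimes G^{\otimes n-1}) \oplus (\check{G} \otimes G^{\otimes n-1})$, I would set $y = e \otimes y_1 + \check{y}$ with $y_1 \in G^{\otimes n-1}$ and $\check{y} \in \check{G} \otimes G^{\otimes n-1}$, and then expand both $y \otimes e$ and $e \otimes y$ and sort the resulting terms according to the analogous decomposition $G^{\otimes n+1} = (e \otimes G^{\otimes n}) \oplus (\check{G} \otimes G^{\otimes n})$ of the ambient group.

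The observation that drives the argument is that $e \otimes y$ lies \emph{entirely} in the summand $e \otimes G^{\otimes n}$, whereas $y \otimes e = e \otimes (y_1 \otimes e) + \check{y} \otimes e$ contributes $e \otimes (y_1 \otimes e)$ to the first summand and $\check{y} \otimes e$ to the second. Comparing with the right-hand side $k e^{\otimes n+1} + x$, whose first summand is $k e^{\otimes n+1} = e \otimes (k e^{\otimes n})$ and whose second summand is $x \in \check{G} \otimes G^{\otimes n}$, and invoking both the uniqueness of the decomposition and the isomorphism $e \otimes (-) : G^{\otimes n} \to e \otimes G^{\otimes n}$, I would read off the two equations
\[y_1 \otimes e - e \otimes y_1 = k e^{\otimes n} + \check{y}, \qquad \check{y} \otimes e = x.\]
The first of these is exactly the hypothesis of the lemma with $n$ replaced by $n-1$, now with $\check{y} \in \check{G} \otimes G^{\otimes n-1}$ playing the role of $x$.

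Applying the inductive hypothesis then yields $y_1 \in \bZ e^{\otimes n-1}$, $k = 0$, and $\check{y} = 0$; the second equation forces $x = \check{y} \otimes e = 0$, and $y = e \otimes y_1 \in \bZ e^{\otimes n}$, which is the claim. For the base case $n = 1$ (reading $G^{\otimes 0} = \bZ$, so that $y_1 \otimes e - e \otimes y_1$ vanishes), the first equation degenerates to $k e + \check{y} = 0$ inside $G = \bZ e \oplus \check{G}$, whence $k = 0$ and $\check{y} = 0$ immediately.

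The step I expect to demand the most care is not conceptual but the bookkeeping of the expansion: one must check that every term has been assigned to the correct summand of $G^{\otimes n+1}$, and in particular that $e \otimes y$ genuinely has no component in $\check{G} \otimes G^{\otimes n}$, so that the $\check{G} \otimes G^{\otimes n}$-component of the full equation is precisely $\check{y} \otimes e = x$. Once this is confirmed, everything follows formally from the freeness of the decomposition (\ref{equation1}) together with the induction.
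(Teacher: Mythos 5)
Your proposal is correct and follows essentially the same route as the paper: the same decomposition $y = e \otimes y_1 + \check{y}$ via (\ref{equation1}), the same comparison of components in $G^{\otimes n+1} = (e \otimes G^{\otimes n}) \oplus (\check{G} \otimes G^{\otimes n})$, and the same reduction of the hypothesis from $n$ to $n-1$ with the same $k$. The only difference is presentational: you close the recursion by a clean induction with base case $n=1$, whereas the paper unwinds it explicitly down to $G^{\otimes 0}=\bZ$ to write $y=(y_n-nk)e^{\otimes n}$ and then substitutes back to force $k=0$ and $x=0$.
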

	
	\begin{proof}
	Note that, by (\ref{equation1}), $y = e \otimes y_1 + x_1$ for some $y_1 \in G^{\otimes n-1}$ and $x_1 \in \check{G} \otimes G^{\otimes n-1}$.
	Then, from (\ref{equation2}), we get the following:
	\[e \otimes (y_1\otimes e - y - k e^{\otimes n}) + (x_1 \otimes e - x) = 0.\]
	Hence, by (\ref{equation1}), $y_1\otimes e - y - k e^{\otimes n}=0$ and $x_1 \otimes e - x = 0$, so
	\[y = y_1 \otimes e - k e^{\otimes n}, \hspace{5mm} x = x_1 \otimes e.\]
	Therefore, from (\ref{equation2}), we get $(y_1 \otimes e - e \otimes y_1) \otimes e = (k e^{\otimes n} + x_1) \otimes e$,
	so
	\[y_1 \otimes e - e \otimes y_1 = k e^{\otimes n} +  x_1.\]
	Note that, replacing $(n-1, y_1, x_1)$ of this equation with $(n, y, x)$, we get (\ref{equation2}).   
	
	Repeating this argument, we can find $y_j \in G^{\otimes n-j}$ for each integer $j$ $(1 \leq j \leq n)$ such that
	\[y_{j-1} = y_j \otimes e - k e^{\otimes n-j+1},\]
	where $y_0:=y$.
	Inductively, we obtain $y = (y_n -n k) e^{\otimes n} \in \bZ e^{\otimes n}$. 
	Hence, from (\ref{equation2}), $e \otimes (k e^{\otimes n}) + x = 0$. 
	Thus, by (\ref{equation1}),  $k=0$ and $x=0$.
	\end{proof}



	For each $n \geq 1$, identify $G^{\otimes 2n-1}$ and the linear span of
	\[\left\{\textstyle\bigotimes \limits_{i\in\bZ} x_i \in G^{\otimes \bZ}\ \colon\  x_i = e \text{ for all }|i| > 2n-1 \right\}.\]	

	\begin{lem}\label{lem2}
	Let $\lambda$ be the shift on $G^{\otimes \bZ}$. For $z \in G^{\otimes \bZ}$ and $w \in H(G, e)$, if the equality 
	\begin{equation}\label{e2}
	(\id-\lambda)(z) = w
	\end{equation}
	holds, then $z \in \bZ e^{\otimes \bZ}$ and $w=0$.
	\end{lem}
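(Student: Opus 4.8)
The plan is to reduce the infinite equation (\ref{e2}) to the finite one already treated in Lemma \ref{lem1}. Since $z$ has finite support it lies in $G^{\otimes 2n-1}$ for some $n$, so both $z$ and $\lambda(z)$ are $e$ in all but finitely many coordinates. The crucial structural observation is that every element of $H(G,e)=\bZ e^{\otimes \bZ}+\check G\otimes G^{\otimes\bN}$ has all of its non-$e$ activity at coordinates $\geq 0$: the summand $\bZ e^{\otimes \bZ}$ is $e$ in every coordinate, while by definition each basis tensor occurring in $\check G\otimes G^{\otimes \bN}$ is $e$ at every negative coordinate. I would exploit this one-sidedness first to force $z$ to be supported on nonnegative coordinates, and then to align its left end with the placement used in Lemma \ref{lem1}.

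The main step is to show that $z$ is supported on $\{0,1,2,\dots\}$. Assuming $z\notin \bZ e^{\otimes \bZ}$, let $p$ be the leftmost coordinate at which some basis tensor in the expansion of $z$ is non-$e$, and decompose the $p$-th tensor factor according to $G=\bZ e\oplus\check G$ as in (\ref{equation1}). By the choice of $p$, the $\check G$-component of $z$ at coordinate $p$ is nonzero. On the other hand $\lambda(z)$ is $e$ at every coordinate $\leq p$, since its leftmost non-$e$ coordinate is $p+1$; hence the $\check G$-component of $w=z-\lambda(z)$ at coordinate $p$ equals that of $z$ and is therefore nonzero. If $p<0$ this contradicts $w\in H(G,e)$, because every element of $H(G,e)$ is $e$ at each negative coordinate. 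Consequently $p\geq 0$, so $z$ is supported on $\{0,\dots,n-1\}$ for some $n$.

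It then remains to identify (\ref{e2}) with an instance of Lemma \ref{lem1}. Writing $y\in G^{\otimes n}$ for $z$ viewed on the coordinates $\{0,\dots,n-1\}$, we have $z=y\otimes e$ and $\lambda(z)=e\otimes y$ as tensors supported on $\{0,\dots,n\}$, so $w=y\otimes e-e\otimes y$. Since $w$ is now supported on $\{0,\dots,n\}$ and lies in $H(G,e)$, its left end sits exactly at coordinate $0$, and the restriction of $H(G,e)$ to tensors supported on $\{0,\dots,n\}$ is $\bZ e^{\otimes n+1}+\check G\otimes G^{\otimes n}$; thus $w=k e^{\otimes n+1}+x$ for some $k\in\bZ$ and $x\in\check G\otimes G^{\otimes n}$. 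Lemma \ref{lem1} then yields $y\in\bZ e^{\otimes n}$, $k=0$, and $x=0$, that is, $z\in\bZ e^{\otimes \bZ}$ and $w=0$.

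I expect the genuine obstacle to be the middle step, namely ruling out negative coordinates in the support of $z$. Note that $\id-\lambda$ commutes with $\lambda$, but $H(G,e)$ is \emph{not} shift-invariant, so one cannot simply translate $z$ into nonnegative coordinates and invoke Lemma \ref{lem1} directly. The substance of the argument lies precisely in testing $w$ against the $\check G$-component at the leftmost active coordinate and using the one-sided support of $H(G,e)$; once the left end has been pinned to $0$, the remainder is the bookkeeping above together with Lemma \ref{lem1}.
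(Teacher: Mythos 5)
Your proposal is correct and follows essentially the same route as the paper: restrict to a finite window, use the one\--sidedness of $H(G,e)$ (all non-$e$ activity at coordinates $\geq 0$) to force $z$ to be supported on nonnegative coordinates, and then reduce the resulting identity $y\otimes e - e\otimes y = ke^{\otimes n+1}+x$ to Lemma \ref{lem1}. The only difference is in execution: where the paper peels off the leftmost coordinates of $z'$ ``inductively'' to get $z'=e^{\otimes n}\otimes y$, you run an extremal argument on the leftmost non-$e$ coordinate via the $\check G$-component projection, which if anything makes that step more explicit.
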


	\begin{proof}
	Take $n \geq 1$,  satisfying $(\id-\lambda)(z)$ $(=w) \in G^{\otimes 2n+1}$.
	Note that 
	\begin{align*}
	(\id - \lambda)(G^{\otimes \bZ}) \cap G^{\otimes 2n+1} &=\{e \otimes z' - z' \otimes e \ :\ z' \in G^{\otimes 2n}\},\\
	H(G, e) \cap G^{\otimes 2n+1} &=\bZ e^{\otimes 2n+1} + e^{\otimes n} \otimes \check{G} \otimes G^{\otimes n}.
	\end{align*}
	Hence, $(\id-\lambda)(z) = e \otimes z' - z' \otimes e$ and $w = k e^{\otimes 2n+1} + e^{\otimes n} \otimes x$ for some $z' \in G^{\otimes 2n}$, $k \in \bZ$, and $x \in \check{G} \otimes G^{\otimes n}$.
	Therefore, by (\ref{e2}),
	\[z' \otimes e - e \otimes z' = k e^{\otimes n+1} + e^{\otimes n} \otimes x .\]
	Inductively, this equality follows that $z' = e^{\otimes n} \otimes y$ for some $y \in G^{\otimes n}$.
	Hence
	\[y \otimes e - e \otimes y = k e^{\otimes n+1} + x.\]
	By Lemma \ref{lem1}, we obtain $y \in \bZ e^{\otimes n}$, $k=0$, and $x=0$.
	Thus, $z \in \bZ e^{\otimes \bZ}$ and $w=0$.
	\end{proof}

	\begin{lem}\label{lem3}
	Let $\lambda$ be the shift on $G^{\otimes \bZ}$.
	Then, 
	\begin{enumerate}[{\rm(i)}]
	\item $\ker(\id - \lambda) = \bZ e^{\otimes \bZ}$,\label{item1}
	\item $G^{\otimes \bZ} = (\id -\lambda) (G^{\otimes \bZ}) \oplus H(G, e)$.\label{item3}
	\end{enumerate}
	\end{lem}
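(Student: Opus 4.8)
The plan is to read off both parts from Lemma \ref{lem2}, which already isolates the interaction between $\id - \lambda$ and $H(G, e)$, supplemented by a shifting argument for the spanning property.

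For (\ref{item1}), one inclusion is immediate: $\lambda$ fixes $e^{\otimes \bZ}$, so $\bZ e^{\otimes \bZ} \subseteq \ker(\id - \lambda)$. For the converse I would take $z \in \ker(\id - \lambda)$, note that $(\id - \lambda)(z) = 0$ and that $0 \in H(G, e)$, and apply Lemma \ref{lem2} with $w = 0$ to conclude $z \in \bZ e^{\otimes \bZ}$. Thus (\ref{item1}) is essentially a one-line corollary of Lemma \ref{lem2}.

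For (\ref{item3}) I would check the two requirements of a direct sum separately. Directness is again immediate from Lemma \ref{lem2}: if $v \in (\id - \lambda)(G^{\otimes \bZ}) \cap H(G, e)$, write $v = (\id - \lambda)(z) = w$ with $w \in H(G, e)$; the lemma forces $w = 0$, so the intersection is $\{0\}$. The real content is that the two summands together span $G^{\otimes \bZ}$. The mechanism is that shifting is trivial modulo the image of $\id - \lambda$: for every integer $k$, the factorization $\id - \lambda^k = (\id - \lambda)(\id + \lambda + \dots + \lambda^{k-1})$ (with the evident analogue for $k \leq 0$) shows $v - \lambda^k(v) \in (\id - \lambda)(G^{\otimes \bZ})$, i.e.\ $v \equiv \lambda^k(v)$ modulo the image. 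Since $G^{\otimes \bZ}$ is free on the basis $E^{\otimes \bZ}$, it suffices to move each basis vector $b = \bigotimes_i e^{(i)}$ into $H(G, e)$ by such a shift. If $b = e^{\otimes \bZ}$ then $b \in \bZ e^{\otimes \bZ} \subseteq H(G, e)$ already. Otherwise let $j_0$ be the least index with $e^{(j_0)} \neq e$ and take $k = -j_0$; then $\lambda^{k}(b) = \bigotimes_i e^{(i + j_0)}$ carries the factor $e^{(j_0)} \in E \setminus \{e\} \subseteq \check{G}$ at position $0$ and the factor $e$ at every negative position, so $\lambda^{k}(b) \in \check{G} \otimes G^{\otimes \bN} \subseteq H(G, e)$. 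Hence $b \equiv \lambda^{k}(b) \in H(G, e)$ modulo the image, giving $b \in (\id - \lambda)(G^{\otimes \bZ}) + H(G, e)$ and completing the spanning.

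The only genuine obstacle is this spanning step, and even there the difficulty is conceptual rather than computational: one must recognize that working modulo $(\id - \lambda)(G^{\otimes \bZ})$ makes $\lambda$ act as the identity, so every basis vector can be translated until its leftmost nontrivial tensor factor sits at the origin, which is exactly the normal form defining $\check{G} \otimes G^{\otimes \bN}$. The verification that the translated vector genuinely lies in $H(G, e)$ rests only on $e^{(j_0)}$ being a basis element distinct from $e$, hence in $\check{G}$.
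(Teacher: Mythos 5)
Your proposal is correct and follows essentially the same route as the paper: part (i) from Lemma \ref{lem2} with $w=0$, directness of the sum from the same lemma, and spanning by reducing to basis vectors in $E^{\otimes \bZ}$ and shifting each one (modulo $(\id-\lambda)(G^{\otimes\bZ})$, via the telescoping identity the paper writes out explicitly) until its leftmost non-$e$ factor sits at position $0$, placing it in $\check{G}\otimes G^{\otimes\bN}\subseteq H(G,e)$.
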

	
	\begin{proof}
	(\ref{item1}) At first, it is clear that $\ker(\id - \lambda) \supset \bZ e^{\otimes \bZ}$.
	Conversely, take $z \in G^{\otimes \bZ}$ such that $(\id - \lambda)(z) =0$.
	Applying Lemma \ref{lem2} to the case of $w=0$, we obtain that $z \in \bZ e^{\otimes \bZ}$. Thus $\ker(\id - \lambda) = \bZ e^{\otimes \bZ}$.
	
	(\ref{item3}) 
	By Lemma \ref{lem1}, $(\id -\lambda) (G^{\otimes \bZ}) \cap H(G, e) = 0$,
	so it is sufficient to show $E^{\otimes \bZ} \subset (\id -\lambda) (G^{\otimes \bZ}) \oplus H(G, e)$.
	Take $f = \bigotimes_{i\in\bZ} e_i \in E^{\otimes \bZ}$.
	Since $e^{\otimes \bZ} \in H(G, e)$, we may assume $f \neq e^{\otimes \bZ}$. 
	Then, there exists an integer $i_0\in\bZ$ such that $e_{i_0} \neq e$ and $e_i = e$ for all $i < i_0$.
	If $i_0=0$, then $f \in  H(G, e)$, so we may assume $i_0\neq 0$.
	Note that $\lambda^{-i_0}(f) \in \check{G} \otimes G^{\otimes\bN}$ and
	\[f = \left\{\begin{array}{ll}
	(\id -\lambda) \left(-\textstyle\sum\limits_{j=1}^{i_0} \lambda^{-j}(f)\right) + \lambda^{-i_0}(f) & \text{for }i_0 > 0\\
	(\id -\lambda) \left(\textstyle\sum\limits_{j=1}^{-i_0} \lambda^{-j+1}(f)\right) + \lambda^{-i_0}(f) & \text{for }i_0 < 0,\\
	\end{array}\right.\]
	Thus, $f \in (\id -\lambda) (G^{\otimes \bZ}) \oplus H(G, e)$.
	\end{proof}

	\begin{Proof}
	Applying Lemma \ref{lem3} in the case of $G=K_*(A)$ and $e = {[1_A]}_0$, we obtain
	\begin{align*}
	\ker(\id - \lambda) &  = \bZ{[1_A]}_0^{\otimes \bZ}, \\
	K_*(A)^{\otimes \bZ} &  = (\id -\lambda)(K_*(A)^{\otimes \bZ}) \oplus H(K_*(A), {[1_A]}_0).
	\end{align*}
	Therefore, from the exactness of (\ref{d1}), the sequence (\ref{d2}) is exact.  
	Moreover, since $\bZ{[1_A]}_0^{\otimes \bZ} \cong \bZ$, (\ref{d2}) is split.
	\qed
	\end{Proof}

\vspace{5mm}

\noindent
\textit{Acknowlegement}:
I would like to thank Tsuyoshi Kato for introducing to this subject and for encouraging discusses.
%
%

\end{document}